\newtheorem{theorem}{Theorem}[section]
\newtheorem{lemma}[theorem]{Lemma}
\theoremstyle{definition}
\newtheorem{example}[theorem]{Example}
\theoremstyle{remark}
\numberwithin{equation}{section}
\newcommand{\cartesian}[3]{\ensuremath{#1_{#2} \times \cdots \times #1_{#3}}}
\newcommand{\Tensor}[4]{\ensuremath{#1_{#2}}{\otimes}_{#4}\cdots{\otimes}_{#4}#1_{#3}} \newcommand{\spaces}[3]{\ensuremath{#1_{#2}, \ldots , #1_{#3}}}
\begin{document}

\title[Lipschitz p-summability  of multilinear operators]{\textbf Lipschitz p-summing  multilinear operators correspond to Lipschitz p-summing operators}

\author{Maite Fern\'{a}ndez-Unzueta}
\address{Centro de Investigaci\'{o}n en Matem\'{a}ticas (Cimat), A.P. 402 Guanajuato, Gto., C:P. 36000 M\'{e}xico}
\curraddr{}
\email{maite@cimat.mx}
\thanks{The author was partially supported by  CONACyT project 284110}

\subjclass[2020]{Primary 47L22; 47H60;  46T99; 46B28}
\keywords{Lipschitz $p$-summing, Segre cone of Banach spaces,  multilinear operator, Pietsch Domination, Hilbert-Schmidt mutlinear operators}
\date{}

\dedicatory{}

\commby{Stephen Dilworth}

\begin{abstract}

We give conditions  that ensure that an operator  satisfying  a
 Piestch domination in a given  setting  also satisfies  a Piestch domination in  a different setting.
 From this we derive  that
 a bounded mutlilinear operator $T$ is Lipschitz $p$-summing  if and only if the  mapping  $ f_T(x_1\otimes\cdots \otimes x_n):=T(x_1,\ldots, x_n)$ is Lipschitz $p$-summing.
 The results are based on   the  projective tensor norm. An example with  the Hilbert tensor norm is provided to show that the statement  may not hold when  a reasonable cross-norm other than the projective tensor norm is considered.
\end{abstract}

\maketitle

\section{Introduction}\label{sec: intro and pre}

 A  bounded multilinear operator $T$ on the tensor product of Banach spaces uniquely determines  a Lipschitz mapping $f_T$  by the relation $f_T(x_1\otimes\cdots \otimes x_n):=T(x_1,\ldots,x_n)$, as explained in  \cite{MFU}.  This relation makes it possible to use the theory of Lipschitz mappings  to study multilinear operators on Banach spaces.

    In \cite{FJ09} J. Farmer and W.B. Johnson introduced the class of  Lipschitz $p$-summing mappings   defined on metric spaces. The above-mentioned relation between  $T$ and $f_T$ makes it natural to ask  for the multilinear mappings $T$ such that $f_T$ is a Lipschitz $p$-summing mapping.

    In this paper we prove that such multilinear mappings   are precisely  the class of Lipschitz $p$-summing multilinear operators introduced  in \cite{Ang-FU}.  The corresponding  Lipschitz $p$-summing norms satisfy {{}{$\pi_p^L(f_T)= \pi^{Lip}_{p}(T)$ }}  (Theorem \ref{thm: Lip=Lip}).
     This result generalizes  the  case of linear operators  proved in  \cite[Theorem 2]{FJ09}.
      A main  part of its  proof consists   of moving  from  a   Pietsch-type  domination  of $f_T$ (as a Lipschitz operator) to another Pietsch-type domination of $f_T$ (as a $\Sigma$-operator associated to the multilinear mapping $T$).
      Theorem \ref{thm: Induced Pietsch} provides conditions in a more general setting, to guarantee that such a motion is possible.

     We  use  Hilbert-Schmidt multilinear operators  to show  that  Theorem \ref{thm: Lip=Lip} may not  hold when a   reasonable cross-norm  other  than the projective tensor norm is considered.

\vspace{.5cm}

Throughout the paper $X_1,\ldots, X_n$ and $Y$ are  real Banach spaces and  $B_{X_i}$ is  the closed unit ball of a space   $X_i$.  The completed projective tensor product of $X_1,\ldots,X_n$
is denoted by  $\hat{\otimes}_{\pi}{X_i}$ and the space  of multilinear bounded operators from $\cartesian{X}{1}{n}$ to $Y$, by   $ \mathcal{L}(\spaces{X}{1}{n}; Y)$. The linear operator determined by $T$ with  domain  the tensor product  will be denoted by $\hat{T}$.
The linear theory of Banach spaces that we will  use  can be found in
\cite{DJT}, the theory of tensor norms in \cite{DeFlo} and \cite{Ryan-libro} and the Lipchitz theory in \cite{BenyLind} and \cite{Weaver-libro}.

  Each bounded  multilinear operator $T\in\mathcal{L}(X_1,\ldots,X_n;Y)$ can be uniquely  associated with a  Lipschitz mapping $f_T:\Sigma_{X_1,\ldots,X_n}\rightarrow Y$  where $\Sigma_{X_1,\ldots,X_n}:=\{x_1\otimes\cdots\otimes x_n; x_i\in X_i\}$ is endowed with the   metric induced by   $ X_1\hat{\otimes}_\pi \cdots \hat{\otimes}_\pi X_n$ \cite[Theorem 3.2]{MFU}. The mapping $f_T(\Tensor{x}{1}{n}{}):=T(\spaces{x}{1}{n})$ and $\Sigma_{X_1,\ldots,X_n}$ are called the $\Sigma$-operator associated to $T$ and  the Segre cone of $\spaces{X}{1}{n}$, respectively.

   A norm $\gamma$ on the vector space $\Tensor{X}{1}{n}{}$ is said to be a { reasonable cross-norm} if it has the following  two properties:
   ($i$) $\gamma(x_1\otimes\dots\otimes x_n)\leq \|x_1\|\cdots \|x_n\|$ for every $x_i\in X_i, \; i=1,\ldots n$, and ($ii$)
   For every $x_i^*\in X_i^*$, the linear functional $x_1^*\otimes\dots\otimes x_n^*$ on $\Tensor{X}{1}{n}{}$ is bounded, and $\|x_1^* \otimes\dots\otimes x_n^*\|\leq \|x_1^*\|\cdots \|x_n^*\|$. We denote the completed space as $\hat{\otimes}_{\gamma}X_i$. The space of  multilinear operators such that its  associated linear operator $\hat{T}$ is continuous  on $\hat{\otimes}_{\gamma}X_i$,  will be denoted  ${\mathcal{L}_{\gamma}\left(\spaces{X}{1}{n}; Y \right)}$ and ${\mathcal{L}_{\gamma}\left(\spaces{X}{1}{n}\right)}$ when $Y$ is the scalar field.

   For a fixed reasonable cross-norm $\gamma$ and   $1 \leq p < \infty$ we say,  as in
      \cite[Definition 5.1]{Ang-FU},
  that  $T\in {\mathcal{L}_{\gamma}\left(\spaces{X}{1}{n}; Y \right)}$    is Lipschitz $p$-summing with respect to $\gamma$  (briefly, $\gamma$-Lipschitz $p$-summing) if there exists $c>0$ such that for $k\in\mathbb{N}$, $i=1,\ldots,k$ and  every  $u_i:=(u_i^1,\ldots, u_i^n), v_i :=(v_i^1,\ldots, v_i^n)\in X_1\times\cdots \times X_n$,
 \begin{equation} \label{eq: def sigma p sumante}
  \sum_{i=1}^k \left\|{T}\left(u_i\right)-T \left(v_i\right)\right\|^p
  \leq    c^p\cdot \sup_{ \varphi \in B_{\mathcal{L}_{\gamma}\left(\spaces{X}{1}{n}\right)}} \sum_{i=1}^k \left|{{\varphi}}\left(u_i\right)-{\varphi}\left(v_i\right)\right|^p \end{equation}
 The best $c$ above is denoted  $\pi^{Lip,\gamma}_p(T)$.
  Whenever   $\gamma$ is the projective tensor norm, it will be omitted in the notation.

If $T$ is a linear operator (i.e., $n=1$ and  $\gamma$  is the norm of $X_1$)  it holds  that  $\pi^{Lip,\gamma}_p(T)$ is the usual $p$-summing norm of $T$ \cite[p.31]{DJT}.

\section{Main results}

 As introduced in  \cite{FJ09},    the Lipschitz $p$-summing  norm ($1\leq p <\infty$) $\pi_p^L(T)$ of a (possibly nonlinear) mapping $T:X\rightarrow Y$  between metric spaces  is the smallest constant $C$ so that for all $(x_i)_i$, $(y_i)_i$ in $X$ \begin{equation}\label{eq: FarJ}
 \sum d(T(x_i),T(y_i))^p\leq C^p \sup_{f\in B_{X^{\#}}} \sum |f(x_i)-f(y_i)|^p
  \end{equation}
where $d$ is the distance in $Y$ and    ${X^{\#}}$ is the  Lipschitz dual of $X$, that is,  the  space  of all real valued Lipschitz functions defined on $X$ that vanish at a specified   point $0\in X$. It is a Banach space with the Lipschitz norm and its unit ball  $B_{X^{\#}}$ is a compact Hausdorff space in the topology of pointwise convergence on $X$. In the case of vector valued functions, that is, when
$Y$ is a Banach space, we will use the notation  $Lip_0(X, Y )$ to designate  the Banach space of Lipschitz
functions $f :X \rightarrow Y $ such that $f(0) = 0$  with pointwise addition and the Lipschitz norm.

\begin{theorem}\label{thm: Lip=Lip} A multilinear operator  $T\in \mathcal{L}(\spaces{X}{1}{n};Y)$ is Lipschitz $p$-summing if and only if its associated $\Sigma$-operator $f_T:\Sigma_{X_1,\ldots, X_n}\rightarrow Y$ satisfies (\ref{eq: FarJ}) for some  $C>0$. In this case {{}{$\pi_p^L(f_T)= \pi^{Lip}_{p}(T)$. }}
\end{theorem}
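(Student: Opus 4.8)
The plan is to read both summing conditions through the single dictionary provided by the Segre cone, which I abbreviate $\Sigma:=\Sigma_{X_1,\ldots,X_n}$; I also write $\mathcal{L}_\pi:=\mathcal{L}_\pi(X_1,\ldots,X_n)$. Every point of $\Sigma$ is an elementary tensor, so a point $u=(u^1,\ldots,u^n)\in X_1\times\cdots\times X_n$ corresponds to $a:=u^1\otimes\cdots\otimes u^n\in\Sigma$ with $T(u)=f_T(a)$; consequently the left-hand sides of \eqref{eq: def sigma p sumante} and \eqref{eq: FarJ} are literally the same quantity once one sets $a_i:=u_i^1\otimes\cdots\otimes u_i^n$ and $b_i:=v_i^1\otimes\cdots\otimes v_i^n$. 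First I would record the relation between the two classes of test functions. Each scalar form $\varphi\in\mathcal{L}_\pi$ induces, by \cite[Theorem 3.2]{MFU}, a real-valued Lipschitz function $f_\varphi$ on $\Sigma$ vanishing at $0$ with $\|f_\varphi\|_{Lip}=\|\varphi\|$, so $\varphi\mapsto f_\varphi$ embeds $B_{\mathcal{L}_\pi}$ isometrically into $B_{\Sigma^{\#}}$, and this embedding is continuous for the respective topologies of pointwise convergence because $f_\varphi(a)=\varphi(u)$ on elementary tensors. Since the correspondence identifies $\mathcal{L}_\pi$ isometrically with $(\hat{\otimes}_{\pi}X_i)^*$, a Hahn--Banach computation gives the single-pair identity $\sup_{\varphi\in B_{\mathcal{L}_\pi}}|\varphi(u)-\varphi(v)|=\|a-b\|_\pi=\sup_{g\in B_{\Sigma^{\#}}}|g(a)-g(b)|$, which is the basic compatibility between the two settings.

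The easy implication is then immediate. If $T$ is Lipschitz $p$-summing in the sense of \eqref{eq: def sigma p sumante}, then enlarging the supremum from $B_{\mathcal{L}_\pi}$ to the larger set $B_{\Sigma^{\#}}$ through the isometric embedding above only increases the right-hand side, so $f_T$ satisfies \eqref{eq: FarJ} with the same constant $\pi^{Lip}_{p}(T)$; hence $\pi_p^L(f_T)\le\pi^{Lip}_{p}(T)$.

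The substance of the theorem is the reverse inequality $\pi^{Lip}_{p}(T)\le\pi_p^L(f_T)$, and here I would pass to dominations. Writing $C:=\pi_p^L(f_T)$, the Pietsch domination for Lipschitz $p$-summing mappings \cite{FJ09} yields a regular probability measure $\mu$ on the compact space $B_{\Sigma^{\#}}$ with $\|T(u)-T(v)\|\le C\big(\int_{B_{\Sigma^{\#}}}|g(a)-g(b)|^p\,d\mu(g)\big)^{1/p}$ for all elementary $a,b$; what is needed, by the Pietsch domination available in the multilinear setting \cite{Ang-FU}, is the same estimate with a probability measure $\nu$ living on the smaller compact space $B_{\mathcal{L}_\pi}$. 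This passage is exactly the motion between Pietsch dominations abstracted in Theorem \ref{thm: Induced Pietsch}, which I would apply with underlying set $X_1\times\cdots\times X_n$, the two index spaces $B_{\Sigma^{\#}}$ and $B_{\mathcal{L}_\pi}$ linked by the embedding $\varphi\mapsto f_\varphi$, and the compatibility recorded in the first paragraph; verifying the hypotheses of that theorem in the present instance produces $\nu$ with the same constant $C$, whence $\pi^{Lip}_{p}(T)\le C=\pi_p^L(f_T)$. Combining the two inequalities gives the asserted equality.

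I expect the hard part to be precisely this transfer of the dominating measure from $B_{\Sigma^{\#}}$ to $B_{\mathcal{L}_\pi}$ without loss in the constant. One cannot argue merely by comparing the two right-hand suprema: already for $n=1$ they reduce to the supremum over $B_{X_1^{\#}}$ versus that over $B_{X_1^{*}}$, and for families of more than one pair the former is strictly larger, so the summing inequality over $B_{\mathcal{L}_\pi}$ does not follow formally from the one over $B_{\Sigma^{\#}}$. The transfer must therefore exploit the genuinely linear nature of the test forms in $\mathcal{L}_\pi=(\hat{\otimes}_{\pi}X_i)^*$ together with the fact that the closed unit ball of $\hat{\otimes}_{\pi}X_i$ is the closed convex hull of the elementary tensors at which the integrands are evaluated; checking that Theorem \ref{thm: Induced Pietsch} encodes exactly these features, and that it delivers the constant $C$ rather than a larger multiple, is where the care is required. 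This is the step that generalizes the linear argument of \cite[Theorem 2]{FJ09}.
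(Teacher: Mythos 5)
Your easy direction agrees with the paper's, and your diagnosis of where the difficulty lies (passing from the Lipschitz dual ball $B_{\Sigma^{\#}}$ to the linear dual ball $B_{\mathcal{L}_\pi}$, which cannot be done by comparing suprema) is exactly right. But your proposed resolution of that difficulty does not work: Theorem \ref{thm: Induced Pietsch} cannot perform this transfer. The only natural continuous map linking your two index spaces is the embedding $B_{\mathcal{L}_\pi}\hookrightarrow B_{\Sigma^{\#}}$, so to pull the Farmer--Johnson measure $\mu$ on $B_{\Sigma^{\#}}$ back to a measure $\nu$ on $B_{\mathcal{L}_\pi}$ you need the \emph{converse} part of Theorem \ref{thm: Induced Pietsch}, whose hypothesis is that the linking map be \emph{surjective}. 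That embedding is nowhere near surjective: almost no Lipschitz function on $\Sigma$ vanishing at $0$ is (the restriction of) a bounded linear functional on $\hat{\otimes}_{\pi}X_i$ --- for instance $g(x):=\min(\|x\|_\pi,1)$ already lies outside the image. Nor is there a continuous map in the opposite direction that would let you invoke the forward part of the theorem; producing such a map would amount to ``linearizing'' an arbitrary Lipschitz function, which is precisely the problem. Contrary to your closing paragraph, Theorem \ref{thm: Induced Pietsch} encodes no linearity or convexity of the test functions whatsoever; it is a purely measure-theoretic push/pull statement along a continuous map, and the pull direction is exactly the one barred here.

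The paper's proof uses Theorem \ref{thm: Induced Pietsch} only between two \emph{Lipschitz} duals, namely $B_{\Sigma_\gamma^{\#}}$ and $B_{(\otimes_\gamma E_i)^{\#}}$ (Lipschitz functions on the Segre cone versus on the whole tensor product of finite-dimensional subspaces $E_i\subset X_i$), where the restriction map \emph{is} surjective by McShane extension --- this is the one place the surjectivity hypothesis can be met. The genuinely hard step, from Lipschitz functions to linear functionals, is then carried out by adapting the differentiation argument of \cite[Theorem 2]{FJ09}: one first reduces to finite-dimensional $E_i$ (using that the ideal of Lipschitz $p$-summing multilinear operators is maximal, and that $\pi_p^L$ is computed on finite sets), embeds $\widehat{\otimes}_\gamma E_i$ into $L_\infty(\mu_1)$, invokes almost-everywhere weak$^*$ differentiability of this embedding, and uses the homogeneity of $f_T$ to replace the Lipschitz factor in the factorization $Jf_T=\beta\, i_{\infty,p}\,\alpha_2|_{\Sigma_\gamma}$ by the \emph{linear} derivative $D_0^{w^*}(\alpha_2)$; it is the linearity of this derivative, combined with $\Pi_p(i_{\infty,p})=1$ and a Hahn--Banach extension from $\widehat{\otimes}_\gamma E_i$ to $\widehat{\otimes}_\pi X_i$, that finally yields the supremum over the linear dual ball with constant $\pi_p^L(f_T)$. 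This entire second stage --- the actual heart of the theorem --- is missing from your proposal, and the single appeal to Theorem \ref{thm: Induced Pietsch} that you substitute for it cannot be made to satisfy that theorem's hypotheses.
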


  The  notions of Lipschitz $p$-summability   compared in this note   are two among  several existing  generalizations of the linear absolutely $p$-summing operators. Each of them comes with  a  generalization  of the so-called Pietsch domination theorem. We  will call  them  Pietsch-tpye dominations.   To the best of our knowledge,  all of them  are proved  adapting  the original linear argument, namely,  as an application of the Hanh-Banach separation theorem (an abstract formulation of $p$-summability can be found in  \cite{BotPelRue2}).  We refer to \cite[2.12]{DJT} to see a  proof of the original linear case.

For a compact Hausdorff  set $K$ let $\mathcal{C}(K)$  denote the space of continuous functions  on $K$ with the sup norm and $\mathcal{M}(K)$ the space of  measures with the variation norm identified with the dual space $\mathcal{C}(K)^*$ by the Riesz' representation theorem.  The next lemma summarizes some  general results in a form  that  will be used later.

\begin{lemma}\label{lem: rho1 rho2 probabilities}
 Let $\varphi:K_1\rightarrow K_2$ be  a continuous  mapping between compact (Hausdorff) spaces. Consider
    $\Phi:\mathcal{C}(K_2)\rightarrow \mathcal{C}(K_1)$    defined as  $\Phi(g):=g\circ \varphi$ and its adjoint operator $\Phi^*:\mathcal{M}(K_1)\rightarrow \mathcal{M}(K_2)$.
    Then, for  any $\mu\in\mathcal{M}(K_1)$  and every $g\in \mathcal{C}(K_2)$ it holds
\begin{equation}\label{eq: equal integrals}
 \int_{K_1} (g\circ \varphi)d\mu=\int_{K_2}gd(\Phi^*\mu).   \end{equation}
 If $\mu$ is a  probability measure on $K_1$, then $\Phi^*(\mu)$ is a probability measure on $K_2$. If $\mu \in \mathcal{M}(K_1)$  is such that $\Phi^*(\mu)$  is positive, then
 for every $h\in \mathcal{C}(K_2)$, $h\geq 0$ it holds
 $ 0\leq \int_{K_1} h\circ \varphi d\mu.$
\end{lemma}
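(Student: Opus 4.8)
The plan is to read everything off the definition of the adjoint operator together with the Riesz identification $\mathcal{M}(K_i)=\mathcal{C}(K_i)^*$, under which the duality pairing is $\langle \mu,g\rangle=\int g\,d\mu$. First I would establish the change-of-variables identity (\ref{eq: equal integrals}). Continuity of $\varphi$ guarantees that $\Phi$ is well defined and bounded (indeed $g\circ\varphi\in\mathcal{C}(K_1)$ and $\|\Phi g\|_\infty\le\|g\|_\infty$ whenever $g\in\mathcal{C}(K_2)$), so its adjoint $\Phi^*$ is a bounded operator $\mathcal{M}(K_1)\to\mathcal{M}(K_2)$, the direction being forced by $\Phi:\mathcal{C}(K_2)\to\mathcal{C}(K_1)$. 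By the very definition of the adjoint, $\langle \Phi^*\mu,g\rangle=\langle \mu,\Phi g\rangle$ for every $g\in\mathcal{C}(K_2)$ and every $\mu\in\mathcal{M}(K_1)$. Translating each pairing into an integral and using $\Phi g=g\circ\varphi$ gives
\begin{equation*}
\int_{K_2}g\,d(\Phi^*\mu)=\langle \Phi^*\mu,g\rangle=\langle \mu,g\circ\varphi\rangle=\int_{K_1}(g\circ\varphi)\,d\mu,
\end{equation*}
which is precisely (\ref{eq: equal integrals}).

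The remaining two assertions I would deduce directly from (\ref{eq: equal integrals}). Assume $\mu$ is a probability measure. To see that $\Phi^*\mu$ is positive I would invoke the standard fact that a regular Borel measure $\nu$ on a compact Hausdorff space is positive exactly when $\int h\,d\nu\ge 0$ for every $h\in\mathcal{C}(K_2)$ with $h\ge 0$; for such an $h$, identity (\ref{eq: equal integrals}) yields $\int_{K_2}h\,d(\Phi^*\mu)=\int_{K_1}(h\circ\varphi)\,d\mu\ge 0$, since $h\circ\varphi\ge 0$ and $\mu\ge 0$. To pin down the total mass I would apply (\ref{eq: equal integrals}) to the constant function $g\equiv 1$, noting $g\circ\varphi\equiv 1$, so that $(\Phi^*\mu)(K_2)=\int_{K_2}1\,d(\Phi^*\mu)=\int_{K_1}1\,d\mu=\mu(K_1)=1$. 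Hence $\Phi^*\mu$ is a positive measure of total mass one, that is, a probability measure.

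Finally, for the last statement I would again appeal to (\ref{eq: equal integrals}): if $\Phi^*\mu$ is positive and $h\in\mathcal{C}(K_2)$ satisfies $h\ge 0$, then
\begin{equation*}
0\le\int_{K_2}h\,d(\Phi^*\mu)=\int_{K_1}(h\circ\varphi)\,d\mu,
\end{equation*}
which is the desired inequality.

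I do not expect a genuine obstacle here: the lemma is a bookkeeping consequence of the adjoint relation and the Riesz representation theorem, and it is really (\ref{eq: equal integrals}) that does all the work. The only points that deserve a moment of care are getting the direction of the adjoint right and justifying positivity of $\Phi^*\mu$ through its action on nonnegative continuous functions, rather than attempting to describe $\Phi^*\mu$ on Borel sets directly (where one would have to recognize it as the pushforward $B\mapsto\mu(\varphi^{-1}(B))$).
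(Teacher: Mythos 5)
Your proposal is correct and follows essentially the same route as the paper: both derive (\ref{eq: equal integrals}) from the adjoint relation under the Riesz identification $\mathcal{M}(K_i)=\mathcal{C}(K_i)^*$, then obtain positivity of $\Phi^*\mu$ by testing against nonnegative continuous functions, compute the total mass with the constant function $1$, and read off the final inequality directly from (\ref{eq: equal integrals}). The only cosmetic difference is that the paper phrases the last assertion with an additional sandwiching function $f\geq h$, which plays no essential role.
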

\begin{proof}
All the facts used in this proof can be found in \cite{Semadeni65}. The mapping  $\Phi$ and its adjoint operator $\Phi^*$ are   bounded linear operators with $\|\Phi\|=\|\Phi^*\|=1$. The duality $\mathcal{M}(K_2)=\mathcal{C}(K_2)^*$  means  that
for  any $\mu\in\mathcal{M}(K_1)$  and every $g\in \mathcal{C}(K_2)$ (\ref{eq: equal integrals}) holds.

Assume that $\mu$ is a probability measure. For each  positive
$g\in \mathcal{C}(K_2)$, $g\circ\varphi \in \mathcal{C}(K_1)$ is positive.   Then
$0\leq  \int_{K_1} g\circ\varphi d\mu$. By (\ref{eq: equal integrals}),   the measure $\Phi^*(\mu)$ is positive, too.  Even more, applying (\ref{eq: equal integrals}) to the  constant  function  $g(k_2)\equiv 1 $ we get $\Phi^*(\mu)(K_2)=\mu(K_1)=1$. Consequently,
$\Phi^*(\mu)$  is   a probability measure.

Now, assume that $\Phi^*(\mu)$ is a positive measure.  Using
    (\ref{eq: equal integrals}) again,  it follows that for
    $h, f \in \mathcal{C}(K_2)$,  $0\leq h\leq f$
   $$ 0\leq  \int_{K_2} h d\Phi^*(\mu)= \int_{K_1} h\circ \varphi d\mu\leq \int_{K_2}f d\Phi^*(\mu)= \int_{K_1} f \circ \varphi d\mu.$$

\end{proof}

Our proof of Theorem \ref{thm: Lip=Lip} relies in  the possibility   of moving  from the  Pietsch-type  domination  of an operator in one context to the  Pietsch-type  domination of it  in other context. The following theorem provides conditions for this.
\begin{theorem}\label{thm: Induced Pietsch}
Fix $1\leq p< \infty$. Let $\varphi:K_1\rightarrow K_2$ be  a continuous  mapping between compact (Hausdorff) spaces,  a non-empty set   $   Z\subset \mathcal{C}(K_2)$,   $F:Z \rightarrow Y$   a mapping into a Banach space $Y$ and $C>0$. If   there exists
  a probability measure  $\rho_1$  on $K_1$   such that for every  $h\in Z$
\begin{equation}\label{eq: abstract Pietsc mu1}
 \|F(h)\|^p \leq  C^p \int_{K_1} |h\circ \varphi |^pd\rho_1
 \end{equation}
 then,  there exists a probability measure  $\rho_2$ in $K_2$, such that  for every  $h\in Z$
   \begin{equation}\label{eq: abstract Pietsc mu2}
\|F(h)\|^p \leq C^p  \int_{K_2} |h|^pd\rho_2.
 \end{equation}
If the continuous mapping $\varphi$ is surjective, then if $\rho_2$ is a probability measure on $K_2$ such that (\ref{eq: abstract Pietsc mu2}) holds, then there exists a probability measure $\rho_1$  on $K_1$ such that (\ref{eq: abstract Pietsc mu1}) holds.

\end{theorem}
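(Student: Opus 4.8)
The plan is to treat the two implications separately: the forward direction rests on a pushforward construction, while the converse relies on lifting a measure along $\varphi$, which is exactly where surjectivity enters. For the first part I would set $\rho_2 := \Phi^*(\rho_1)$, with $\Phi(g) = g\circ\varphi$ and $\Phi^*$ its adjoint as in Lemma \ref{lem: rho1 rho2 probabilities}. Since $\rho_1$ is a probability measure, that lemma immediately guarantees that $\rho_2$ is again a probability measure on $K_2$, so only the inequality (\ref{eq: abstract Pietsc mu2}) remains to be checked.

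The verification of (\ref{eq: abstract Pietsc mu2}) is then a direct computation. Given $h\in Z\subset\mathcal{C}(K_2)$, the function $|h|^p$ again lies in $\mathcal{C}(K_2)$, so I may apply the identity (\ref{eq: equal integrals}) with $g=|h|^p$. Using the pointwise equality $|h|^p\circ\varphi=|h\circ\varphi|^p$, this gives
\[
\int_{K_2} |h|^p \, d\rho_2 = \int_{K_2} |h|^p \, d\Phi^*(\rho_1) = \int_{K_1} |h|^p \circ \varphi \, d\rho_1 = \int_{K_1} |h \circ \varphi|^p \, d\rho_1.
\]
Multiplying by $C^p$ and invoking hypothesis (\ref{eq: abstract Pietsc mu1}) yields $\|F(h)\|^p \leq C^p\int_{K_2}|h|^p\,d\rho_2$, which is precisely (\ref{eq: abstract Pietsc mu2}).

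For the converse, assuming $\varphi$ surjective, the idea is to reverse this computation: it suffices to produce a probability measure $\rho_1$ on $K_1$ with $\Phi^*(\rho_1)=\rho_2$, since then the displayed chain of equalities reads verbatim and carries (\ref{eq: abstract Pietsc mu2}) back to (\ref{eq: abstract Pietsc mu1}). To build such a lift I would first note that surjectivity of $\varphi$ makes $\Phi$ an isometric embedding, because $\|g\circ\varphi\|_\infty=\sup_{k_2\in\varphi(K_1)}|g(k_2)|=\|g\|_\infty$. Hence $\Phi$ is injective, and one may unambiguously define a linear functional $L$ on the subspace $\Phi(\mathcal{C}(K_2))\subset\mathcal{C}(K_1)$ by $L(g\circ\varphi):=\int_{K_2}g\,d\rho_2$. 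Surjectivity further makes $L$ positive on this subspace: if $g\circ\varphi\geq 0$ then $g\geq 0$ on all of $K_2$, so $\int_{K_2}g\,d\rho_2\geq 0$.

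The crux, and the step I expect to be the main obstacle, is upgrading $L$ to a \emph{positive} functional on all of $\mathcal{C}(K_1)$ whose representing measure is a probability measure; merely invoking surjectivity of $\Phi^*$ (which holds since $\Phi$ is bounded below) returns some signed measure, not a positive one. Here I would apply the M.\ Riesz extension theorem: every $f\in\mathcal{C}(K_1)$ is dominated by the constant function $\|f\|_\infty$, which equals $\Phi$ of the constant $\|f\|_\infty$ on $K_2$ and so lies in the range of $\Phi$; this domination condition lets the positivity of $L$ extend to a positive linear functional $\widetilde{L}$ on $\mathcal{C}(K_1)$. By the Riesz representation theorem $\widetilde{L}$ is integration against a positive measure $\rho_1$, and since $\Phi(1)=1$ we get $\rho_1(K_1)=\widetilde{L}(1)=\int_{K_2}1\,d\rho_2=1$, so $\rho_1$ is a probability measure. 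Finally, testing $\Phi^*(\rho_1)$ against an arbitrary $g\in\mathcal{C}(K_2)$ via (\ref{eq: equal integrals}) and the definition of $L$ gives $\int_{K_2}g\,d\Phi^*(\rho_1)=\int_{K_1}g\circ\varphi\,d\rho_1=\int_{K_2}g\,d\rho_2$, so $\Phi^*(\rho_1)=\rho_2$, completing the reversal and hence the theorem.
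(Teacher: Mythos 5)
Your proof is correct, and while your forward direction coincides with the paper's (define $\rho_2:=\Phi^*(\rho_1)$ and apply (\ref{eq: equal integrals}) to $g=|h|^p$), your converse takes a genuinely different route. The paper never attempts to lift $\rho_2$ to a \emph{positive} measure on $K_1$: it uses only that $\Phi^*$ is a quotient map to obtain a signed preimage $\rho$ of $\rho_2$, then exploits the positivity of $\Phi^*(\rho)=\rho_2$ (via Lemma \ref{lem: rho1 rho2 probabilities}) to establish the scalar inequality $\sum_i\lambda_i\|F(h_i)\|^p\le C^p\sup_{k_1}\sum_i\lambda_i|h_i\circ\varphi(k_1)|^p$, and finally runs a Pietsch-style Hahn--Banach separation of the cone generated by the functions $q_h(k_1)=\|F(h)\|^p-C^p|h(\varphi(k_1))|^p$ from the strictly positive functions; the resulting probability measure $\rho_1$ satisfies the domination (\ref{eq: abstract Pietsc mu1}) but need not push forward to $\rho_2$. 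You instead build an exact lift: the functional $L(g\circ\varphi):=\int_{K_2}g\,d\rho_2$ on the subspace $\Phi(\mathcal{C}(K_2))$, well defined and positive precisely because $\varphi$ is surjective, extended to a positive functional on $\mathcal{C}(K_1)$ by the M.\ Riesz extension theorem (the domination hypothesis holds because the constants lie in the range of $\Phi$), then represented by a measure via Riesz--Markov and normalized automatically since $1_{K_1}=\Phi(1_{K_2})$. Your steps all check out: $\Phi$ is injective so $L$ is unambiguous, positivity of $L$ on $\Phi(\mathcal{C}(K_2))\cap\{f\ge 0\}$ uses surjectivity exactly as you say, and the identity $\Phi^*(\rho_1)=\rho_2$ lets the forward computation reverse verbatim. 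Your conclusion is formally stronger --- it shows the pushforward map is surjective on probability measures, a clean measure-theoretic fact of independent interest --- at the cost of invoking the Riesz extension theorem. The paper's argument buys self-containedness: it stays entirely within the Hahn--Banach separation machinery that underlies all Pietsch-type domination theorems, which fits the theme of the surrounding results, and it needs only the weaker input that $\Phi^*$ is onto $\mathcal{M}(K_2)$.
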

\begin{proof}
  Let   $\rho_1$ be  a probability measure satisfying  (\ref{eq: abstract Pietsc mu1}). By Lemma \ref{lem: rho1 rho2 probabilities} we know that $\rho_2:=\Phi^*(\rho_1)$ is a probability measure on $K_2$ and that they satisfy (\ref{eq: equal integrals}).  It remains  to  check that  (\ref{eq: abstract Pietsc mu2}) holds. For  an $h\in Z$ consider  $g(k_2)=|h(k_2)|^p$. By (\ref{eq: equal integrals}) we have that
  $$\int_{K_1} |h \circ \varphi|^pd\rho_1=\int_{K_2}|h|^pd\rho_2.   $$
   From this it is immediate that whenever  $F$ satisfies (\ref{eq: abstract Pietsc mu1}), it also satisfies  (\ref{eq: abstract Pietsc mu2}).

   To prove  the remaining assertion,  assume that
 $\varphi$ is surjective. In this case the operator $\Phi$ in Lemma \ref{lem: rho1 rho2 probabilities} is an isometry (see \cite[Theorem 2.2]{Semadeni65}) and consequently,   $\Phi^*$ is a quotient operator.
Let  $\rho_2$ be a  probability measure on $K_2$ such that (\ref{eq: abstract Pietsc mu2}) holds.
   Since $\Phi^*$ is a surjective mapping,
   there exists $\rho\in  \mathcal{M}(K_1)$ such that
    $\Phi^*(\rho)=\rho_2.$ Being
   $\rho_2$  positive, by Lemma \ref{lem: rho1 rho2 probabilities} we will have that
    for  $h_i\in Z$
   and  $\lambda_i\geq 0$

\begin{multline*}
   \int_{K_1}\sum_{i=1}^k\lambda_i |h_i\circ \varphi|^pd\rho \leq \int_{K_1}( \sup_{k_1\in K_1} \sum_{i=1}^k\lambda_i |h_i\circ \varphi(k_1) |^p ) d\rho=\\
     \int_{K_2}( \sup_{k_1\in K_1} \sum_{i=1}^k\lambda_i |h_i\circ \varphi(k_1)|^p ) d\rho_2=\sup_{k_1\in K_1} \sum_{i=1}^k\lambda_i |h_i\circ \varphi(k_1)|^p.
\end{multline*}
The first equality above follows if we write   for   $c:=( \sup_{k_1\in K_1} \sum_{i=1}^k\lambda_i |h_i\circ \varphi(k_1)|^p )$
$$\int_{K_1}c\,d\rho=\int_{K_1}c\, (1_{K_2}\circ \varphi) d\rho=
\int_{K_2}c\cdot{1}_{K_2}d\rho_2=\int_{K_2}c\,d\rho_2.$$

Condition   (\ref{eq: abstract Pietsc mu2}) and equality  (\ref{eq: equal integrals}) applied to   $g(k_2)=|h_i(k_2)|^p$  ($i=1,\ldots, k$), along with the inequality above, imply

    \begin{multline}\label{eq:  Pietsch sums}
 \sum_{i=1}^k\lambda_i \|F(h_i)\|^p \leq  C^p \sum_{i=1}^k\lambda_i \int_{K_2} |h_i|^pd\rho_2
  =\\ C^p \sum_{i=1}^k\lambda_i \int_{K_1} |h_i\circ \varphi |^pd\rho
  \leq  C^p \sup_{k_1\in K_1} \sum_{i=1}^k\lambda_i |h_i\circ \varphi(k_1)|^p.
 \end{multline}

 Let $Q_1$ be  the cone consisting of all
  positive linear combinations of functions on $K_1$ of the form
  $$q_h(k_1):=\|F(h)\|^p- C^p|h(\varphi(k_1))|^p, \quad h\in Z. $$
  From (\ref{eq:  Pietsch sums})  we have that $Q_1$ can be separated  from the positive cone ${P_1:=\{f\in \mathcal{C}(K_1);\, f>0\}}$ by a linear functional $\widetilde{\rho}_1\in \mathcal{M}(K_1)$. Choosing $-\widetilde{\rho}_1$ if necessary, we know the existence of some  $c\in \mathbb{R}$ such that  for every $q\in Q_1, f\in P_1$
    $$\int_{K_1}q\,d{\widetilde{\rho}_1}
    \leq c <\int_{K_1}f\,d{\widetilde{\rho}_1}.  $$
  This inequality  used with   functions $\lambda q$  and  $f\equiv \mu$ where   $q$ is a fixed function in $Q_1$ and any $\lambda,\mu>0$, implies
   that   $c=0$. We get from this that   $\widetilde{\rho}_1$  is positive.  Normalizing  $\widetilde{\rho}_1$  we  get a  probability measure $\rho_1$ satisfying (\ref{eq: abstract Pietsc mu1}).

\end{proof}

\begin{proof}[Proof of Theorem \ref{thm: Lip=Lip}]Let  $ {\mathcal{L}\left(\Sigma_{\spaces{X}{1}{n}}\right)}$  be   the   space of $\Sigma$-operators associated to the  bounded multilinear  forms ${\mathcal{L}\left({\spaces{X}{1}{n}}\right)}$. They  are isometric spaces with the Lipschitz norm and the multilinear operator norm, respectively \cite[Proposition 3.3]{MFU}. Using   the isometric inclusion   $ \varphi :B_{\mathcal{L}\left(\Sigma_{\spaces{X}{1}{n}}\right)}\hookrightarrow B_{\Sigma_{\spaces{X}{1}{n}}^{\#}}$, we derive  that whenever $T$ satisfies (\ref{eq: def sigma p sumante}) for some $c>0$,  its associated mapping $f_T$  satisfies (\ref{eq: FarJ})  with $X:={\Sigma_{\spaces{X}{1}{n}}}$ and the same $c$. Thus,  $\pi_p^L(f_T)\leq \pi_p^{Lip}(T)$.

To  prove the reverse implication, we will consider  restrictions to finite dimensional subspaces.
For fixed     finite dimensional subspaces $E_i\subset X_i, \, i=1,\ldots, n$ let $\gamma$  be  the  reasonable cross-norm on $\otimes E_i$ induced by the inclusion in $\widehat{\otimes}_{\pi}X_i$.
Then $\Sigma_\gamma:=(\Sigma_{E_1,\ldots, E_n},\gamma)$ is a metric subspace  of $X$ and ${f_T}_{|\Sigma_{ \gamma}}$ is a Lipschitz mapping satisfying (\ref{eq: FarJ}). Since $\pi^L_p({f_T})$  is defined by means of evaluations on finite sets, it holds that
$$\pi^L_p(f_T)=\sup_{\{E_i\subset X_i\}_{i=1}^n}\{\pi^L_p({f_T}_{|\Sigma_{ \gamma}})\}.$$
On the  other hand,   as proved  in \cite[Example 3.2]{Samuel RACSAM 20}, the class of Lipschitz $p$-summing  multilinear operators is maximal. This means in particular that $$\pi^{Lip}_p(T)=\sup_{\{E_i\subset X_i\}_{i=1}^n}\{\pi^{Lip,\gamma}_p({T}_{|{E_1\times\cdots \times E_n}})\}. $$

 From now on, we will omit the spaces $E_i$ in the notation and will assume that $f_T$ is defined on $\Sigma_\gamma$.
By the Pietsch-type domination theorem proved in  \cite[Theorem 1]{FJ09}, there exists  a
probability  measure $\mu$ in $B_{\Sigma_\gamma^{\#}}$ such that for any
$x:=x_1\otimes\cdots\otimes x_n, y:=y_1\otimes\cdots\otimes y_n $ in  $\Sigma_{\gamma}$
\begin{equation}\label{eq: Pietsc FJ}
 \|f_T(x)-f_T(y)\|^p \leq \pi^L_p(f_T)^p \int_{B_{\Sigma_\gamma^{\#}}}|\zeta(x)-\zeta(y)|^pd\mu(\zeta).
 \end{equation}

First we  construct an induced probability measure $\mu_1$ on $B_{(\otimes_{\gamma}E_i )^{\#}}$ which   satisfies a Pietsch-type domination.
Consider the restriction mapping:

\[\begin{array}{ccc}
   \varphi:  B_{(\otimes_{\gamma}E_i)^{\#}} & \rightarrow & B_{\Sigma_{\gamma}^{\#}} \\
    \zeta  & \mapsto & \zeta_{|_{\Sigma_{\gamma}}}.
  \end{array}
\]
 Now we  identify $\Sigma_{\gamma}$ with its isometric copy   in $\mathcal{C}(B_{\Sigma_{\gamma}^{\#}})$
 by means of the natural isometric mapping $x\mapsto \delta_{x}$ where
 $\delta_x(\zeta):=\zeta(x)$ for each $\zeta\in B_{\Sigma_{\gamma}^{\#}}$.
 For each pair
$x:=x_1\otimes\cdots\otimes x_n, y:=y_1\otimes\cdots\otimes y_n $ in  $\Sigma_{\gamma}$, let   $h_{x,y}(\eta):=\eta(x)-\eta(y) \in \mathcal{C}(B_{\Sigma_{\gamma}^{\#}})$, $Z:=\{h_{x,y}; \,x,y\in \Sigma_{\gamma}\}$ and $F(h_{x,y}):=f_T(x)-f_T(y)$.
Inequality (\ref{eq: Pietsc FJ}) implies that  $F$ satisfies (\ref{eq: abstract Pietsc mu2}) in  Theorem \ref{thm: Induced Pietsch}.  Since McShane's extension of a Lipschitz mapping  guarantees that $\varphi$ is surjective,  $F$ satisfies also (\ref{eq: abstract Pietsc mu1}). That is,
 there is a probability  measure $\mu_1$ on  $B_{(\otimes_{\gamma}E_i)^{\#}}$ satisfying that
  for
 each pair
$x:=x_1\otimes\cdots\otimes x_n, y:=y_1\otimes\cdots\otimes y_n $ in  $\Sigma_{\gamma}$
\begin{equation}\label{eq: Pietsch FJ 2}
 \|f_T(x)-f_T(y)\|^p \leq \pi^L_p(f_T)^p  \int_{B_{(\otimes_{\gamma}E_i)^{\#}}}  |\zeta(x)-\zeta(y)|^pd{\mu}_1(\zeta).
 \end{equation}

  {{
  We are now in position to adapt the proof of the linear case \cite[Theorem 2]{FJ09} to our setting. There we can find the  justification for the following facts.  They can also  be tracked from  \cite[Proposition 6.41]{BenyLind}.

  Without loss of generality, we can assume that $\mu_1$ is separable. Let $\alpha:\widehat{\otimes}_{\gamma}  E_i \rightarrow  L_{\infty}(\mu_1)$ be the natural isometric embedding into $\mathcal{C}(B_{(\widehat{\otimes}_{\gamma} E_i)^{\#}})$ composed with the natural inclusion  into $L_{\infty}(\mu_1)$, $\alpha(w):=[\delta_w]$
 and let   $i_{\infty,p}$ be  the natural inclusion from  $L_{\infty}(\mu_1)$ into $L_{p}(\mu_1)$. The following properties hold:
     \begin{enumerate}
     \item The mapping $\alpha$ is weak$^*$ differentiable almost everywhere. This means that for  (Lebesgue) almost every $w_0\in \widehat{\otimes}_\gamma{{E_i}}$, there is a linear operator $D_{w_0}^{w^*}(\alpha):\widehat{\otimes}_\gamma{{E_i}}\rightarrow L_{\infty }(\mu_1)$ such that for all $f\in L_1(\mu_1)$ and for every $w\in \widehat{\otimes}_\gamma{{E_i}}$,
         $$\lim_{t\to 0}\bigg \langle \frac{\alpha(w_0+tw)-\alpha(w_0)}{t}, f\bigg \rangle=\langle D_{w_0}^{w^*}(\alpha)(w),f\rangle.  $$
     \item The operator $i_{\infty,p}\alpha$ is differentiable almost everywhere. This means that for (Lebesgue) almost every $w_0\in   \widehat{\otimes}_\gamma{{E_i}}$, there is a linear operator $D_{w_0}(i_{\infty,p}\alpha):\widehat{\otimes}_\gamma{{E_i}}\rightarrow L_{p }(\mu_1)$  such that
         $$\sup_{\|w\|\leq 1}\bigg \|  \frac{i_{\infty,p}\alpha(w_0+tw)-i_{\infty,p}\alpha(w_0)}{t}-D_{w_0}(i_{\infty,p}\alpha)(w) \bigg\|_{p}\to 0 \hspace{.2cm} {\mbox{as }}\; t\to 0.$$
          \end{enumerate}

   Select $w_0\in \widehat{\otimes}_\gamma{{E_i}}$ where both derivatives exist. Since $i_{\infty,p}$ is a is $w^*-w^*$ continuous mapping, equality
     $D_{w_0}(i_{\infty,p}\alpha)=i_{\infty,p}D_{w_0}^{w^*}(\alpha)$ holds. Let $\alpha_1(w):=\alpha(w+w_0)-\alpha(w_0)$. Then $D_{w_0}^{w^*}(\alpha)= D_{0}^{w^*}(\alpha_1)$,  $\alpha_1(0)=0$ and $\|D_{0}^{w^*}(\alpha_1)\|\leq Lip(\alpha)$.
    Consider the mapping   $
  \phi: B_{(\widehat{\otimes}_\gamma E_i)^{\#}}\rightarrow B_{(\widehat{\otimes}_\gamma E_i)^{\#}} $ defined as $\phi(\zeta)(w):=\zeta(w+w_0)-\zeta(w_0)$.  By the duality described in Lemma \ref{lem: rho1 rho2 probabilities} and being $\phi$  a bijective isometry,
  }}
  {{
     it induces the existence of a probability measure $\mu_2\in \mathcal{M}((\widehat{\otimes}_\gamma E_i)^{\#})$ such that for every $x,y \in \Sigma_{\gamma}$
   \begin{equation*}\label{eq: Pietsch FJ 3}
 \|f_T(x)-f_T(y)\|^p \leq \pi^L_p(f_T)^p  \int_{B_{(\otimes_{\gamma}E_i)^{\#}}}  |\zeta(x+w_0)-\zeta(y+w_0)|^pd{\mu}_2(\zeta).
 \end{equation*}}}

  {{
  {{ Let $\widetilde{Y}$ be the  finite dimensional subspace $\widehat{T}(\widehat{\otimes}_\gamma{{E_i}})$ of $Y$. For each $\epsilon >0$, let  $J:\widetilde{Y}\hookrightarrow \ell_{\infty}^m$ be a linear embedding with $\|J\|=1, \|J^{-1}\|\leq 1+\epsilon$. Let  $\alpha_2(w):=[\delta_{w+w_0}-\delta_{w_0}]\in L_{\infty}(\mu_2)$ for  $w\in \widehat{\otimes}_{\gamma} E_i$.
  The previous  inequality allows us to define ${\beta}( (i_{\infty,p}\alpha_{{2}})(x)) :=Jf_T(x)$ for every $x\in \Sigma_\gamma$, where $i_{\infty,p}$ denotes now the natural inclusion from $L_{\infty}(\mu_2)$ to $L_p(\mu_2)$. Then,  we have the  factorization $Jf_T={\beta} i_{\infty,p}  {{\alpha_{2}}}_{|_{\Sigma_{\gamma}}}$  with $Lip(\alpha_{{2}})\leq 1$, $Lip({\beta})\leq \pi^L_p(f_T)$. By the injectivity of   $\ell_{\infty}^m$ we can extend   ${\beta}$ to $L_p(\mu_2)$  preserving its norm to  obtain

\hspace{1cm}\xymatrix{
\Sigma_{\gamma} \ar[r]^ {{f}_T}\ar[d]^{{\alpha_{2}}_{|_{\Sigma_{\gamma}}}} &\widetilde{Y}\ar[r]^{J}& \ell_{\infty}^m  \\
L_{\infty}(\mu_2) \ar[rr]^ {i_{\infty,p}}&&  L_p(\mu_2) \ar[u]^{\widetilde{\beta}}. &
}

{{
The mapping
$\phi$, which determines $\mu_2$ as   the pull-back of  $\mu_1$,  induces the isometric onto isomorphisms
 $A_p:L_p(B_{(\otimes_{\gamma}E_i)^{\#}},\mu_2)\rightarrow L_p(B_{(\otimes_{\gamma}E_i)^{\#}},\mu_1)$ defined as $A_p(f):=f\circ \phi $, for $1\leq p \leq \infty$.   If $1\leq p < \infty$ and $1=\frac{1}{p}+ \frac{1}{p'}$ then $A_{p'}^{-1}=A_p^*. $  These relations along with the fact that $A_1^*$ is $w^*-w^*$ continuous, guarantee that the following derivatives of $\alpha_2$ at $0$ exist and satisfy $D_0^{w^*}(\alpha_2)=A_1^* D_0^{w^*}(\alpha_1)$ and $i_{\infty,p} D_0^{w^*}(\alpha_2)=D_0(i_{\infty,p}\alpha_2)$. Consequently, $\|D_{0}^{w^*}(\alpha_2)\|\leq Lip(\alpha_2)\leq 1$.
 }}

}}

The following calculations  are  the same that the ones in \cite[Theorem 2]{FJ09}, evaluated only at vectors in $\Sigma_\gamma$.

 Let $\widetilde{\beta}_n(w):=n\widetilde{\beta}(\frac{w}{n})$.  Then   $Lip(\widetilde{\beta}_n)=  Lip(\widetilde{\beta})$. Using that $Jf_T$ is homogeneous, for each $x_1\otimes \cdots \otimes x_n \in \Sigma_{\gamma}$ it holds that
   \begin{multline*}
 \|J\circ f_T(x_1\otimes \cdots \otimes x_n)-\widetilde{\beta}_ni_{\infty,p}D_0^{w^*}\alpha_{{2}}(x_1\otimes \cdots \otimes x_n) \|=\\\|\widetilde{\beta}_nni_{\infty,p}\alpha_{{2}}(\frac{x_1\otimes \cdots \otimes x_n}{n})-\widetilde{\beta}_ni_{\infty,p}D_0^{w^*}\alpha_{{2}}(x_1\otimes \cdots \otimes x_n)  \|   \leq \\
    Lip(\beta)\|ni_{\infty,p}\alpha_2(\frac{x_1\otimes \cdots \otimes x_n}{n})-D_0(i_{\infty,p}\alpha_2({x_1\otimes \cdots \otimes x_n}))\|\xrightarrow[n \to \infty]{} 0.
 \end{multline*}

 Since   $\{\widetilde{\beta}_n\}$ is a bounded set in   $Lip_0(L_p(\mu_2), \ell_{\infty}^m)$, it  has   a cluster point $\beta_0$ in it.    Let $f_D:=D_0^{w^*}({\alpha_{{2}}})_{|_{\Sigma_\gamma}}$ and $T_D$  be  the $\Sigma$-operator and the multilinear mapping  respectively,
 associated to $D_0^{w^*}({\alpha_2})$.
  Then we have   the factorization $Jf_T={\beta_0}i_{\infty,p} f_{D}$ with $Lip(f_{{D}})\leq \|D_0^{w^*}(\alpha_{{2}})\|\leq Lip(\alpha_2)$ and the   factorization of the multilinear mapping  $JT=\beta_0 i_{\infty,p} T_{D}$ with $\|T_{D}\|=\|{D_0^{w^*}({\alpha_{{2}}})}\|_{{\widehat{\otimes}_\pi E_i\rightarrow L_{\infty}}} \leq \|{D_0^{w^*}({\alpha_{{2}}})}\|_{{\widehat{\otimes}_\gamma E_i\rightarrow L_{\infty}}}\leq Lip(\alpha_2)$.  The result already follows from these computations, if we use  an  observation in   \cite[Section 5]{Ang-FU}. For the sake of completeness we write the argument. Let $k\in\mathbb{N}$, $i=1,\ldots,k$ and  $x_i:=x^i_1\otimes\cdots\otimes x^i_n, y_i:=y^i_1\otimes\cdots\otimes y^i_n\in \Sigma_\gamma$. Using that  the
$p$-summing norm  $\Pi_p(i_{\infty,p})$  of the linear inclusion is one,  we have that for
 $x_i, y_i \in \Sigma_{\gamma}$
 \begin{multline*}
\sum_{i=1}^k \left\|Jf_T(x_i)-Jf_T(y_i)\right\|^p =
  \sum_{i=1}^k \left\|\beta_0i_{\infty,p}f_{D}(x_i)-\beta_0i_{\infty,p}f_{D}(y_i)\right\|^p
  \\\leq {Lip(\beta_0)}^p
  {\Pi_p(i_{\infty,p})}^p \sup_{ \varphi\in B_{L_{\infty}^*}} \sum_{i=1}^k \left|\varphi {D_0^{w^*}(\alpha_2)}(x_i)-\varphi D_0^{w^*}(\alpha_2)(y_i)\right|^p
\\ \leq    Lip(\beta_0)^p \|D_0^{w^*}(\alpha_2)\|^p \sup_{ \zeta \in  B_{(\widehat{\otimes}_{\gamma}E_i)^*}} \sum_{i=1}^k \left|\zeta(x_i)-\zeta(y_i)\right|^p \\ \leq Lip(\beta_0)^p Lip(\alpha_2)^p \sup_{ \varphi \in B_{(\widehat{\otimes}_{\pi}X_i)^*}} \sum_{i=1}^k \left|{{\varphi}}\left(x_i\right)-{\varphi}\left(y_i\right)\right|^p.
\end{multline*}
 The last inequality holds because $\widehat{\otimes}_{\gamma}E_i$ is a closed subspace of $\widehat{\otimes}_{\pi}X_i$ and consequently each  norm-one linear form defined on $\widehat{\otimes}_\gamma E_i$ has a norm-one extension. Then $\pi_p^{Lip}({{f_T}_{|_{\Sigma_\gamma}}})\leq (1+\epsilon) Lip(\alpha_2) Lip({\beta_0}) $. Since this is true for arbitrary $\epsilon$ and also for arbitrary  finite dimensional spaces $E_i\subset X_i$, $i=1,\ldots,n$, we have that   $\pi_p^{Lip}({f_T})\leq  Lip(\alpha_2) Lip( {{\beta}_0}) $ and, consequenlty,  $\pi_p^{Lip}(T)\leq \pi_p^L(f_T)$.

 }}

\end{proof}

This result answers affirmatively  Question 7.1 in \cite{Ang-FU}.
 In that  paper  it was also introduced the Lipschitz $p$-summability of a $\Sigma$-operator. Theorem \ref{thm: Lip=Lip} clearly implies that for $\Sigma$-operators  both  notions  coincide.

\vspace{.5cm}

\vspace{.5cm}

\section{ Comparison of Lipschitz $p$-summabilities in the case of  other reasonable crossnorms}\label{sec: beta Sigma op}

Here we consider the analgous question when a  reasonable cross-norm $\gamma$,  other  than the projective norm, is  defined on $X_1\otimes\cdots\otimes X_n$. Namely, if  it is true that a multilinear mapping $T\in {\mathcal{L}_{\gamma}\left(\spaces{X}{1}{n}; Y \right)}$ is $\gamma$-Lipschitz $p$-summing (\ref{eq: def sigma p sumante}) if and only if  its associated  $\Sigma$-operator $f_T:(\Sigma_{X_1,\ldots,X_n}, \gamma)\rightarrow Y $  is a  Lipschitz $p$-summing mapping (\ref{eq: FarJ}).

 Using    the isometric inclusion   $ \varphi :B_{\mathcal{L}_\gamma\left(\Sigma_{\spaces{X}{1}{n}}\right)}\hookrightarrow B_{\Sigma_{\gamma}^{\#}}$ it is  direct to prove that if $T$ is  a $\gamma$-Lipschitz $p$-summing multilinear operator, then  $f_T$ on $\Sigma_\gamma$ is   a Lipschitz $p$-summing mapping and $\pi_p^{L}(f_T:\Sigma_\gamma\rightarrow Y)\leq \Pi_p^{Lip,\gamma}(T)$.  We will see that the reciprocal statement does not always hold. Note that in this case we are   assuming that $\widehat{T}$ is continuous on $\widehat{\otimes}_{\gamma}X_i$.

\begin{lemma}\label{lem:  the same for Sigmas}
Let $ \gamma$ be a reasonable cross-norm defined on $X_1\otimes\cdots\otimes X_n$.
\begin{enumerate}
  \item If  $\Sigma_{\gamma}:=(\Sigma_{X_1,\ldots, X_n},\gamma)$,  then the identity mappings $Id:\Sigma\rightarrow \Sigma_{\gamma}$ and   $Id: B_{\Sigma_{\gamma}^{\#}}\rightarrow  B_{\Sigma^{\#}}$  are bi-Lipschitz  with $Lip(Id)=1$ and $Lip(Id^{-1})  \leq 4^{n-1}$.
  \item For  $T\in {\mathcal{L}_{\gamma}\left(\spaces{X}{1}{n}; Y \right)}$ and $1\leq p < \infty$, $ f_T:\Sigma_\gamma\rightarrow Y $ is a Lipschitz $p$-summing mapping if and only if $f_T:\Sigma \rightarrow Y $ is a Lipschitz $p$-summing mapping. In this case  $\pi_p^{L}(f_T)\leq 4^{n-1} \pi_p^{L}(f_T:\Sigma_{{\gamma}}\rightarrow Y)$.  \end{enumerate}

\end{lemma}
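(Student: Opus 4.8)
The plan is to obtain part (2) as a formal consequence of part (1), so that the real content lies in the metric comparison of part (1), and inside that only the estimate $\pi(x-y)\le 4^{n-1}\gamma(x-y)$ on the Segre cone is nontrivial. Throughout I write $\pi(\cdot)$ and $\varepsilon(\cdot)$ for the projective and injective tensor norms and use the standard fact that any reasonable cross-norm satisfies $\varepsilon\le\gamma\le\pi$. Set $d_\gamma(x,y):=\gamma(x-y)$ and $d_\pi(x,y):=\pi(x-y)$ for $x,y\in\Sigma_{X_1,\ldots,X_n}$; these are the metrics of $\Sigma_\gamma$ and $\Sigma$.

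For part (1) the contractive estimate is immediate: since $\gamma\le\pi$ on $X_1\otimes\cdots\otimes X_n$ one has $d_\gamma\le d_\pi$, so $Lip(Id:\Sigma\to\Sigma_\gamma)\le 1$; evaluating at $y=0$ and using that $\gamma$ and $\pi$ both equal $\|x_1\|\cdots\|x_n\|$ on a single elementary tensor gives $Lip(Id)=1$. The two dual identity maps are then handled purely formally: a real function $g$ on the common underlying set satisfies $Lip_\pi(g)\le Lip_\gamma(g)\le 4^{n-1}Lip_\pi(g)$ as soon as $d_\gamma\le d_\pi\le 4^{n-1}d_\gamma$, which yields $B_{\Sigma_\gamma^{\#}}\subseteq B_{\Sigma^{\#}}\subseteq 4^{n-1}B_{\Sigma_\gamma^{\#}}$ and hence the stated Lipschitz constants $1$ and $4^{n-1}$ of $Id:B_{\Sigma_\gamma^{\#}}\to B_{\Sigma^{\#}}$ and of its inverse.

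The crux is therefore the estimate $\pi(x-y)\le 4^{n-1}\gamma(x-y)$ for $x=x_1\otimes\cdots\otimes x_n$, $y=y_1\otimes\cdots\otimes y_n$. Since $\gamma\ge\varepsilon$, it suffices to prove $\pi(x-y)\le 4^{n-1}\varepsilon(x-y)$, and I would do this by induction on $n$, peeling off the first factor. The base case $n=1$ is trivial ($\pi=\varepsilon$). For the step, write $x=x_1\otimes u'$, $y=y_1\otimes v'$ with $u'=x_2\otimes\cdots\otimes x_n$, $v'=y_2\otimes\cdots\otimes y_n$ in the $(n-1)$-fold Segre cone of $W:=X_2\otimes\cdots\otimes X_n$ carrying the projective norm. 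The element $x-y$ has tensor rank $\le 2$ in $X_1\otimes W$; applying the bilinear estimate below, associativity of the projective norm, and the identity $\sup_{\psi\in B_{W^{*}}}|\psi(w)|=\pi(w)$ gives
\[
\pi(x-y)=\pi_{X_1\hat{\otimes}_{\pi}W}(x-y)\le 4\,\varepsilon_{X_1\hat{\otimes}_{\varepsilon}W}(x-y)=4\sup_{f\in B_{X_1^{*}}}\pi\big(f(x_1)u'-f(y_1)v'\big).
\]
Each $f(x_1)u'-f(y_1)v'$ is again a difference of two decomposable tensors, so the inductive hypothesis bounds its projective norm by $4^{n-2}$ times its injective norm; taking the supremum over $f$ and recognizing $\sup_{f,f_2,\ldots,f_n}\big|\prod_i f_i(x_i)-\prod_i f_i(y_i)\big|=\varepsilon(x-y)$ yields $\pi(x-y)\le 4^{n-1}\varepsilon(x-y)$.

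The one genuinely geometric ingredient — and the step I expect to be the main obstacle — is the bilinear estimate: for $P$ of tensor rank $\le 2$ in any $A\otimes B$, one has $\pi(P)\le 4\,\varepsilon(P)$. I would prove it by restricting the legs of $P$ to subspaces $A_0\subseteq A$, $B_0\subseteq B$ of dimension $\le 2$, noting that $\pi_{A\otimes B}(P)\le\pi_{A_0\otimes B_0}(P)$ while $\varepsilon_{A\otimes B}(P)=\varepsilon_{A_0\otimes B_0}(P)$ by Hahn--Banach, and then transporting $P$ to $\ell_2^2\otimes\ell_2^2$ through isomorphisms of Banach--Mazur distance $\le\sqrt2$; there $\pi(P)\le 2\,\varepsilon(P)$, since the nuclear norm of a rank-$\le2$ matrix is at most twice its operator norm. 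The two distortions contribute $(\sqrt2)^2$ and the Hilbert bound contributes $2$, giving the constant $2\cdot(\sqrt2)^2=4$, and iterating through the $n-1$ factors is precisely what produces $4^{n-1}$. Part (2) is then immediate: the quantity $\sum\|f_T(x_i)-f_T(y_i)\|^p$ is independent of the metric placed on the domain, while the inclusions $B_{\Sigma_\gamma^{\#}}\subseteq B_{\Sigma^{\#}}\subseteq 4^{n-1}B_{\Sigma_\gamma^{\#}}$ show that the two Pietsch suprema in (\ref{eq: FarJ}) differ by at most the factor $4^{(n-1)p}$; this yields both implications of the equivalence and the asserted norm inequality.
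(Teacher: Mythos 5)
Your proposal is correct, but it takes a genuinely different route from the paper on the only point of substance. The paper's proof of part (1) is essentially a citation: the metric equivalence $d_\gamma\le d_\pi\le 4^{n-1}d_\gamma$ on the Segre cone is quoted from \cite[Theorem 2.1]{MFU}, and part (2) is then said to follow immediately --- exactly as in your final paragraph, since the left-hand side of (\ref{eq: FarJ}) does not see the domain metric and the ball inclusions $B_{\Sigma_\gamma^{\#}}\subseteq B_{\Sigma^{\#}}\subseteq 4^{n-1}B_{\Sigma_\gamma^{\#}}$ compare the two Pietsch suprema. What you do differently is to prove the cited estimate from first principles, and your argument is sound: reducing $\gamma$ to $\varepsilon$, peeling one factor by associativity of $\pi$, noting that $x-y=x_1\otimes u'-y_1\otimes v'$ has rank at most two in $X_1\otimes W$, and closing the induction through the identities $\varepsilon_{X_1\otimes_\varepsilon W}(x-y)=\sup_{f\in B_{X_1^*}}\pi_W\bigl(f(x_1)u'-f(y_1)v'\bigr)$ and $\sup_f\varepsilon_W\bigl(f(x_1)u'-f(y_1)v'\bigr)=\varepsilon(x-y)$, both of which are valid, as is the observation that $f(x_1)u'-f(y_1)v'$ is again a difference of elementary tensors so the inductive hypothesis applies. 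The rank-two lemma $\pi(P)\le 4\,\varepsilon(P)$ is also proved correctly: restricting to two-dimensional legs costs nothing ($\pi$ can only decrease, $\varepsilon$ is unchanged by Hahn--Banach), John's theorem gives distortion $\sqrt2$ per leg, and on $\ell_2^2\otimes\ell_2^2$ the nuclear norm of a matrix is at most twice its operator norm, so the constant is $2(\sqrt2)^2=4$, whence $4^{n-1}$ after iteration. What each approach buys: the paper stays short and keeps the focus on summability, delegating the geometry of the Segre cone to the earlier paper; your version makes the lemma self-contained and shows exactly where the constant $4^{n-1}$ comes from. Two small remarks: to get $Lip(Id)=1$ (not merely $\le 1$) you need the cone to contain a nonzero elementary tensor, which you use implicitly when evaluating at $y=0$; and your ball inclusions actually yield the one-sided improvement $\pi_p^L(f_T:\Sigma\to Y)\le\pi_p^L(f_T:\Sigma_\gamma\to Y)$ with constant $1$, the factor $4^{n-1}$ being needed only for the reverse implication, so you prove slightly more than the stated inequality.
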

\begin{proof}

Recall that  $f_T(x_1\otimes\cdots \otimes x_n):=T(x_1,\ldots, x_n)$.
The first assertion in (1) is proved in  \cite[Theorem 2.1]{MFU} and the second follows immediately from it.
To prove (2) it is enough to observe that
  inequality (\ref{eq: FarJ}) holds for the supremum on  $B_{\Sigma^{\#}}$ if and only if it holds on $B_{\Sigma_{\gamma}^{\#}}$   with a constant at most $4^{n-1}C$.
\end{proof}

 \begin{example} \label{ex: not Lip=Lip Hilberttensorproduct}
   Let $H$ be the    Hilbert  reasonable cross-norm on $\ell_2\otimes \ell_2$ (see,  e.g.  \cite[Definition 5.8]{Matos03}) and consider $T:\ell_2\times \ell_2\rightarrow \ell_2\widehat{\otimes}_H \ell_2$ defined as $T((a_i)_i,(b_j)_j)=\sum_{i=1}^{\infty}a_ib_ie_i\otimes e_i$.
   Then, for every $1\leq p <\infty$, $f_T$ is Lipschitz $p$-summing as a Lipschitz mapping  (\ref{eq:  FarJ}) on $\Sigma_{H}$,  $T\in  \mathcal{L}_{H }(\ell_2, \ell_2;  \ell_2\widehat{\otimes}_H \ell_2) $, but $T$ is not a $H$-Lipschitz $p$-summing bilinear mapping.

   \

  To prove the assertions, recall  first that the completed space $\ell_2\widehat{\otimes}_H \ell_2$ is a Hilbert space and  $\{e_i\otimes e_j\}_{i,j}$ is an ortonormal basis for it.
The  linear mapping   associated with $T$ satisfies
 $\widehat{T}\in \mathcal{L}(\ell_2\widehat{\otimes}_H \ell_2, \ell_2\widehat{\otimes}_H \ell_2)$ which   says that   $T\in  \mathcal{L}_{H }(\ell_2, \ell_2;  \ell_2\widehat{\otimes}_H \ell_2) $.

 $T$ can be factorized as $T=i\circ S$   where  $S\in \mathcal{L}(\ell_2, \ell_2;  \ell_1)$ is defined as $S((a_n)_n,(b_j)_j)=(a_jb_j)_j$ and $i:\ell_1\rightarrow \ell_2\hookrightarrow  \ell_2\widehat{\otimes}_H \ell_2$ is the natural inclusion. Since  $i$  is absolutely summing, $T$ is a Lipschitz $1$-summing bilinear operator. By Theorem \ref{thm: Lip=Lip}, $f_T:\Sigma\rightarrow \ell_2\widehat{\otimes}_H\ell_2$ is a Lipschitz $1$-summing mapping.  By Lemma \ref{lem:  the same for Sigmas}, we also have that $f_T:\Sigma_{{H}}\rightarrow \ell_2\widehat{\otimes}_H\ell_2$ is Lipschitz $1$-summing.

   Now we check that $T$ is not a $H$-Lipschitz $1$-summing bilinear operator. By \cite[Theorem 5.2]{Ang-FU}, this is equivalent to prove that ${T}$ is not a Hilbert-Schmidt bilinear operator. Thus, it is equivalent to prove that $\widehat{T}\in\mathcal{L}(\ell_2\widehat{\otimes}_H\ell_2,\ell_2\widehat{\otimes}_H\ell_2)$ is not a Hilbert-Schmidt linear operator \cite[Proposition 5.10]{Matos03}.    But this is  clear since $\{e_i\otimes e_j\}_{i,j}$ is an orthonormal basis of    the space and $\sum_{i,j=1}^{\infty}\|\widehat{T}(e_i\otimes e_j)\|^2$  is not finite.
  The same example serves to prove the case for any  $1<p<\infty$.

\end{example}

\section*{Acknowledgement}

The author wishes to thank Samuel Garc\'ia-Hern\'andez for helpful discussions during the preparation of this manuscript  and the  anonymous referee  whose suggestions helped  improve and clarify it.

\bibliographystyle{amsplain}

\end{document}